\newtheorem{theorem}{Theorem}
\newtheorem{lemma}{Lemma}
\newtheorem{corollary}{Corollary}
\newtheorem{pretheorema}{{\bf Theorem}}
\newenvironment{theorema}[1]{\begin{pretheorema}
{\hspace{-0.2em}{\rm #1}{\bf}}}{\end{pretheorema}}
\def\sud{{\mathrm{Sud}}}
\title{{Sudoku Rectangle Completion}}
\author{Mohammad Mahdian$^1$ \and Ebadollah S. Mahmoodian$^2$\thanks{This research is partially  supported by a grant from the INSF.}}
\begin{document}
\date{}
\maketitle
\vspace*{-1cm}
\begin{center}
\footnotesize{
$^1$Google Research \\
Mountain View, CA, USA\\[3mm]
$^2$Department of Mathematical Sciences \\
 Sharif University of Technology \\
 Tehran, I.R. IRAN \\
}
 \end{center}

\begin{abstract}
Over the last decade, Sudoku, a combinatorial number-placement puzzle,
has become a favorite pastimes of many all around the world. In this
puzzle, the task is to complete a partially filled $9 \times 9$ square with
numbers 1 through 9, subject to the constraint that each number must
appear once in each row, each column, and each of the nine $3 \times 3$
blocks. Sudoku squares can be considered a subclass of the
well-studied class of Latin squares. In this paper, we study natural
extensions of a classical result on Latin square completion to Sudoku
squares. Furthermore, we use the procedure developed in the proof 
to obtain asymptotic bounds on the number of Sudoku squares of order $n$.
\end{abstract}
Key Words: Sudoku squares, Latin squares, Number of Sudoku squares, Critical sets in Latin squares.

\section{Introduction and preliminaries}
\label{sec:definitions}
A Latin square is an $n\times n$ matrix with entries in $1,\ldots,n$  such that each of the numbers $1$ to $n$ appears exactly once in each row and in each column. Latin squares are heavily studied combinatorial objects that date back to the time of Euler and probably earlier. 
A variant of the notion of Latin squares has recently surfaced in the form of a number-placement
puzzle called Sudoku. In this puzzle, the task is to complete a partially filled $9 \times 9$ square with
numbers 1 through 9 such that in addition to the Latin square conditions, each number appears exactly 
once in each of the nine $3 \times 3$ blocks. This puzzle was popularized in 1986 by the Japanese puzzle
company Nikoli and became an international hit in the 2000s, although similar puzzles have appeared in 
various publications around the world since late 19th century. The emergence of this puzzle has generated
a surge of interest in the mathematical properties of Sudoku squares~\cite{MR2882508}.

In this paper, we study a Sudoku rectangle completion problem similar to a classical result of M. Hall on 
Latin rectangle completion. 
We start with the formal definition of the main notions used in this paper. 
Definitions and notations not given here may be found in standard combinatorics and graph theory
textbooks such as~\cite{MR2368647} and \cite{MR1871828}.

A {\sf Latin square} of order $n$ is an $n\times n$ matrix with entries from
$[n]=\{1,\ldots,n\}$ that satisfies the following two conditions:

\begin{itemize}
\item {\bf row condition}: every element in $[n]$ appears at most once in 
each row.
\item {\bf column condition}: every element in $[n]$ appears at most once 
in each column.
\end{itemize}

When $n = k^2$ for an integer $k$, for every
$i,j\in[k]$, the {\sf $(i,j)$th block} of an $n\times n$ matrix $M$ is defined as 
the set of entries with coordinates in $((i-1)k + x,
(j-1)k + y)$ for $x,y\in [k]$. We say that $(i,j)$ are the coordinates
of this block. These blocks partition the set of entries in $M$ into $k^2$ submatrices, 
each of size $k\times k$ and therefore containing $n$ entries. 
The $i$th {\sf row block} of $M$ is the union of the blocks at coordinates $(i,j)$ 
for $j\in[k]$. Similarly, the $j$th {\sf column block} of $M$ is the union of the blocks at coordinates $(i,j)$ 
for $i\in[k]$.
A {\sf Sudoku square}
of order $n=k^2$ is an $n\times n$ matrix that in addition to the row and 
column conditions above, satisfies the following condition:

\begin{itemize}
\item {\bf block condition}: every element in $[n]$ appears at most once in 
each block.
\end{itemize}

A {\sf partial Latin (Sudoku) square} of order $n$ is an $n\times n$ matrix with
entries from $[n]\cup\{*\}$ (with $*$ representing an {\em empty} entry) 
that satisfies the row and column (row, column, and block) conditions. A partial Latin (Sudoku) square
$P_2$ is an {\sf extension of a partial Latin (Sudoku) square} $P_1$ if they have the same
order and for every
$(i,j)\in[n]^2$, if $P_1(i,j)\neq *$, then $P_1(i,j)=P_2(i,j)$. For $m
< n$, an $m\times n$ Latin (Sudoku) rectangle is an $n\times n$ partial Latin (Sudoku)
square in which all cells in the first $m$ rows are filled and all
remaining cells are empty. More generally, for every $p, q\le n$, a
$(p,q,n)$-Latin (Sudoku) rectangle is an $n\times n$ partial Latin (Sudoku) square
in which all cells in the intersection of the first $p$ rows and the
first $q$ columns are filled and all remaining cells are empty.

For Latin rectangles there is a well-known theorem of Marshal Hall~\cite{MR0013111} that states:
Every $m\times n$ Latin rectangle can be extended to an $n\times n$
Latin square. This theorem is proved by using the classical matching theorem of Philip Hall. 
A natural question is whether the similar statement holds for Sudoku rectangles.
In this paper, we study this question, and will show that, perhaps surprisingly, the answer
depends on the value of $m$.

For $n=9$ (the regular Sudoku), this question was answered by
Kanaana and Ravikumar~\cite{MR2654731}. They showed that for all values
of $m$ except $m=5$, an $m\times 9$ Sudoku rectangle can always be 
completed to a Sudoku square. For $m=5$, this is not always possible. For 
example, see the $5\times 9$ Sudoku rectangle in Figure~\ref{fig:example}.
Note that none of the numbers $1,\ldots,9$ can be placed in the square marked
with a star. 

\begin{figure}[h]
\centerline{
\begin{tabu}{|[2pt]c|c|c|[2pt]c|c|c|[2pt]c|c|c|[2pt]}
\tabucline[2pt]{-}
  1 & 2 & 3 & 4 & 5 & 6 & 7 & 8 & 9\\\hline
  4 & 5 & 6 & 7 & 8 & 9 & 1 & 2 & 3\\\hline
  7 & 8 & 9 & 1 & 2 & 3 & 4 & 5 & 6\\\tabucline[2pt]{-}
  8 & 3 & 2 & 5 & 6 & 1 & 9 & 4 & 7\\\hline
  9 & 6 & 5 & 8 & 4 & 7 & 2 & 3 & 1\\\hline
  $\star$ & & & & & & & & 
\end{tabu}
}
\caption{A $5\times 9$ Sudoku rectangle with no valid completion}
\label{fig:example}
\end{figure}

For general $n$, the only previous result on completability of Sudoku rectangle 
is the following theorem proved by Kanaana and Ravikumar~\cite{MR2654731}.
 \begin{theorema}{ \rm (\cite{MR2654731})}
\label{thm:lastblock}
Assume $n=k^2$ and $n-k\le m< n$. Then every $m\times n$ Sudoku
rectangle can be completed to a Sudoku square.
\end{theorema}

In this paper, we prove two more sufficient conditions
for $m$, under which every $m\times n$ Sudoku rectangle is completable
to an $n \times n$ Sudoku square (Section~\ref{sec:sufficient}). 
These results are proved using a two-stage procedure for
completing a Sudoku rectangle, where generalized matching and
bipartite graph edge coloring is used within the two stages. We will show in
Section~\ref{sec:characterization} that the union of these  three conditions is a
full characterization of the values of $m$ for which every $m\times n$
Sudoku rectangle is completable. An important ingredient of this proof is a constructive lemma
that shows how to generalize a non-completable $m \times k$ Sudoku rectangle to 
a non-completable $m\times n$ Sudoku rectangle.
Sections~\ref{sec:alg} and~\ref{sec:counting} are devoted to two corollaries of our characterization.
We observe in Section~\ref{sec:alg}, that the procedure used to prove our sufficient conditions
gives an efficient
algorithm for deciding whether a given $m\times n$ rectangle is
completable and find a valid completion. Furthermore, in
Section~\ref{sec:counting} we use this procedure combined with
theorems of Minc and Van der Waerden, to prove asymptotic bounds on
the number of Sudoku squares of order $n$.

\paragraph{Related work.}
Recently quite a few papers have appeared on the mathematical questions raised for Sudoku squares 
similar to Latin squares. For example in ~\cite{FelgenhauerJarvis06} the number of Sudoku squares is 
discussed, while some results about the sets of mutually orthogonal Sudoku 
squares are given in~\cite{PedersenVis09}.  Concepts of ``critical (or defining) sets''  i.e. 
``the minimum Sudoku problem''  and 
``Sudoku trades (or detection of unavoidable sets in Sudoku)''
are studied in~\cite{LinWu10}, \cite{MR3223774}, ~\cite{Williams11}, and~\cite{MR3130362}. 
In~\cite{MR2408485} Sudoku is considered as a special case of Gerechte designs and they introduce some 
interesting mathematical problems about them. The complexity and completeness of finding 
solution for Sudoku is investigated in~\cite{TakayukiTakahiro03}. List coloring of Latin and 
Sudoku graphs is studied in~\cite{MR2882508} which has an extensive number of 
references on Sudoku. 
In~\cite{MR2654731} the problem of completing Sudoku rectangles is studied.

\section{Sufficient Conditions}
\label{sec:sufficient}
In Theorem~\ref{thm:lastblock} it is shown 
that if the Sudoku rectangle only leaves parts of the last row block
incomplete, then it can be completed to a Sudoku square.
In this section, we prove two theorems, showing that for
certain other values of $m$, an $m\times n$ Sudoku rectangle can always be
completed to a Sudoku square. We start with the  next result which shows that
if the Sudoku rectangle consists of a number of full row blocks and a
number of empty row blocks (i.e., no row block is partially filled), then it
can always be completed to a Sudoku square.

\begin{theorem}
\label{thm:fullblocks}
Assume $n=k^2$ and $m = lk$ for some $l< k$. Then every $m\times n$
Sudoku rectangle $R$ can be completed to a Sudoku square.
\end{theorem}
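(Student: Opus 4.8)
The plan is to fill the $k-l$ empty row blocks of $R$ by a two-stage procedure, mirroring M. Hall's proof that a Latin rectangle extends to a Latin square but carried out at two levels of granularity so that the block condition is respected. First I would record the counting consequences of the hypothesis $m=lk$. Since the filled part of $R$ consists of $l$ complete row blocks, each column block contains $l$ complete $k\times k$ blocks in its filled part, so every symbol occurs exactly $l$ times in the filled part of each column block. Hence in the empty part each symbol must occur exactly $k-l$ times in each column block: once in each of its $k-l$ empty blocks, and in exactly the $k-l$ columns of that column block that do not yet contain it. The crucial point is that these counts are \emph{uniform} — the hypothesis that the filled rows form complete row blocks forces each symbol to be missing the same number $k-l$ of times from every column block and from every column, which is precisely the regularity that the matching and edge-coloring arguments below will need.

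Stage 1 (coarse assignment). For each empty row $r$ and each column block $j$ I would determine the set $T_{r,j}\subseteq[n]$ of $k$ symbols to be placed in row $r$ within column block $j$, subject to: for fixed $r$ the sets $T_{r,1},\dots,T_{r,k}$ partition $[n]$ (the row condition), and within each empty block every symbol is assigned to exactly one of that block's $k$ rows (forced by the block condition). Viewing, for a fixed empty row block, the incidences between its $k$ rows and the $k$ column blocks, this is a decomposition of the $k$-fold complete bipartite multigraph $k\cdot K_{k,k}$ into $n=k^2$ perfect matchings, one per symbol; such a $1$-factorization exists because $k\cdot K_{k,k}$ is $k^2$-regular and bipartite. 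To keep Stage 1 compatible with Stage 2 I would not pick this decomposition arbitrarily, but through a generalized (degree-constrained) matching that simultaneously respects, for each symbol, the $k-l$ columns still available to it in each column block.

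Stage 2 (fine assignment). Working inside one column block $j$ at a time, I would place the symbols prescribed by Stage 1 into specific columns so that every column of block $j$ becomes a permutation of $[n]$ (the column condition); the block condition is then automatic, since Stage 1 has already placed each symbol in exactly one row of each empty block. This is a proper edge coloring of a bipartite graph whose two sides are the empty rows and the $k$ columns of block $j$, with colors the symbols and lists given by $T_{r,j}$ on the row side and by the still-missing sets $A_c$ on the column side. Because the counts are uniform the underlying bipartite graph is regular, so König's bipartite edge-coloring theorem delivers the desired color classes, equivalently a $1$-factorization into the per-symbol matchings. Splicing the $k$ completed column blocks together then yields the completed Sudoku square.

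The main obstacle is the compatibility of the two stages: I must guarantee that the coarse assignment from Stage 1 can always be refined in Stage 2, i.e. that for every column block the list-constrained edge coloring with row-lists $T_{r,j}$ and column-lists $A_c$ genuinely exists (lists are not handled automatically by König's theorem). This reduces to verifying a Hall-type condition, equivalently the regularity and balance needed for a $1$-factorization, and it is exactly here that the hypothesis $m=lk$ with $l<k$ does the work: complete row blocks make every relevant bipartite graph regular and leave a uniform surplus of $k-l\ge 1$ available slots, so no Hall defect can arise and a symbol-by-symbol matching never gets stuck. Verifying this regularity carefully, and choosing the Stage 1 generalized matching so that the column availabilities line up, is the technical heart of the argument.
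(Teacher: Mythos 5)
There is a genuine gap, and it is precisely the one you flag yourself as ``the technical heart of the argument'': the compatibility of your two stages is never established, and it cannot be dismissed by appealing to regularity. Because you fix row/column-block placement first (the sets $T_{r,j}$) and exact columns second, your Stage~2 is a \emph{list}-constrained coloring problem --- the columns already contain symbols from the filled part --- and, as you correctly note, K\"onig's theorem does not handle lists. Your claim that uniform counts guarantee ``no Hall defect can arise'' is false for an arbitrary Stage-1 choice. Take $k=2$, $l=1$, with filled rows $(1\ 2\ |\ 3\ 4)$ and $(3\ 4\ |\ 1\ 2)$. The Stage-1 assignment $T_{3,1}=\{1,3\}$, $T_{3,2}=\{2,4\}$, $T_{4,1}=\{2,4\}$, $T_{4,2}=\{1,3\}$ satisfies every constraint you impose (each row's sets partition $[4]$, each block's sets partition $[4]$), yet it is unrefinable: both $1$ and $3$ are already present in column~$1$, so row~$3$ cannot place its two symbols into columns $1$ and $2$. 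Thus the row/block constraints you place on Stage~1 are insufficient, and the extra condition you gesture at --- ``choosing the Stage~1 generalized matching so that the column availabilities line up'' --- is not a deferred verification; it \emph{is} the whole problem, and the proposal contains neither a construction of such a choice nor an existence argument for one.

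The paper avoids this obstacle by running the two stages in the opposite order, one row block at a time. Its first stage assigns each symbol to an \emph{exact column}: for each block of the new row block, build the bipartite graph with the $n$ numbers on one side and the block's $k$ columns on the other, joining a number to the columns where it does not yet appear (degrees $k-l$ and $n-lk$), and use a flow/Hall argument to assign each number to one column so that each column receives exactly $k$ numbers. This stage absorbs \emph{all} the list-type constraints, namely the column and block conditions. Only then does the second stage decide rows: the incidence graph of the chosen assignment (numbers versus the $n$ columns) is $k$-regular bipartite, and a proper $k$-edge-coloring from K\"onig's theorem sends each (number, column) pair to one of the $k$ new rows. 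Crucially, this second stage has no lists at all, since the new rows are completely empty; properness of the coloring is the only requirement, which is exactly what K\"onig delivers. If you transpose your stages accordingly (and iterate row block by row block rather than filling all empty row blocks simultaneously), your counting observations are correct and the argument goes through; as written, the step on which everything depends is missing.
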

\begin{proof}
We prove that we can complete the rectangle $R$ row-block by
row-block. To do this, we need to show that $R$ can be extended to a
$k(l+1) \times n$ Sudoku rectangle. This is done in two steps: First,
we fill the elements in the $(l+1)$st row block in such a way that column 
and block conditions are satisfied. In the second step, we permute the
elements in each column of the $(l+1)$st row block in such a way that the resulting
configuration satisfies the row condition as well. Note that column and block conditions
will stay satisfied after such a permutation, and therefore, the resulting configuration
is a valid Sudoku extension of $R$.

We start with the first step. The $(l+1)$st row block consists of $k$
blocks, which we denote by $B_1, B_2, \ldots, B_k$. For each block
$B_d$, we formulate an assignment problem as follows: On one side of
the assignment problem, we have the numbers $1,\ldots,n$. On the other
side, we have the $k$ columns $c_1, c_2, \ldots  , c_k$ of the block. A number $i$ can be
assigned to a column $j$, if $i$ does not appear in  column $j$ in
$R$. For each number $i$, this number appears exactly once in each of
the blocks above $B_d$, and these appearances are in different
columns. Therefore, each number $i$ can be assigned to exactly $k-l$
different columns. Also, for each column $j$, there are precisely $ lk$
distinct numbers in that column in $R$. Therefore, there are precisely
$n- lk$ numbers that can be assigned to column $j$. The objective is to
find an assignment that assigns each number to exactly one column, and
assigns precisely $k$ numbers to each column.

We show that this assignment problem always has a feasible
solution. We prove this by using a network flow argument. The assignment
problem can be formulated as a network flow problem: There is a source
$s$ that has links of capacity 1 to $n$ nodes representing the
numbers; each number $i$ has a link of capacity 1 to each of the nodes representing
columns that it can be assigned to; and there is a sink $t$ that has a
link of capacity $k$ from each of the nodes representing the
columns. It is easy to see that it is possible to route a flow of
value $n$ from $s$ to $t$ in this network: The flow on each of the
edges from $s$ to number nodes is 1; the flow on each of the edges
from number nodes to column nodes is $1/(k-l)$; and the flow from each
of the column nodes to $t$ is $k$. It is not hard to see that this is
a feasible flow of value $k$ (and is therefore the maximum flow from
$s$ to $t$). By the Ford-Fulkerson theorem, there must be an integer
flow of the same value in this network. This integer flow gives an
assignment of numbers $1,\ldots,n$ to the $k$ columns in $B_d$ such
that each number is assigned to precisely one column, each column has
precisely $k$ numbers assigned to it, and no number is assigned to a
column $c_i$ where it appears in the $i$th column of $R$. Putting
these assignments together for all blocks $B_d$ of the $(l+1)$st row block, we obtain an assignment
of the $n$ numbers to the $n$ columns such that:
\begin{itemize}
\item each number is assigned to precisely $k$ columns, one from each block;
\item each column has precisely $k$ numbers assigned to it; and
\item no number is assigned to a column where it appears in one of the cells of the column
  above that.
\end{itemize}

Let us call this assignment $M$. The second step is to take the
assignment $M$ and specify in which of the $k$ rows of the $(l+1)$st row block
each number must go. We construct a bipartite graph
as follows: On one side, we have the numbers $1,\ldots,n$, and on the
other side, we have the $n$ columns of the $(l+1)$st row block. There is an edge between number
$i$ and column $j$, if $i$ is assigned to $j$ under the assignment
$M$. By the properties of $M$, this is a bipartite $k$-regular
graph. By K\"{o}nig's theorem, this graph has an edge-coloring with
$k$ colors. Let the colors be denoted by $1,\ldots, k$. We now
complete the $(l+1)$st row block as follows: For each edge $(i,j)$
colored with color $c$, we place the number $i$ on the $c$th row of the
row block and $j$th column. It is easy to see that with the addition of
this row-block all the Sudoku conditions are still satisfied: There is
no repeated element in any row, since the coloring is a proper edge
coloring; there is no repeated element in each column by the third
property of the assignment $M$; and there is no repeated element in
each block by the first property of the assignment $M$.
\end{proof}

Next, we prove the following sufficient condition for completability
of a Sudoku rectangle.

\begin{theorem}
\label{thm:thirdcase}
Assume $n=k^2$ and $m =  lk+r$ for some $l< k$ and $0 \le r <k$. Then an $m\times n$
Sudoku rectangle $R$ can always be completed to a Sudoku square, if $(k-r)(k-l)\ge  lk$.
\end{theorem}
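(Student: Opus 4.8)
The plan is to adapt the two-stage procedure of Theorem~\ref{thm:fullblocks} to a partially filled row block and then recurse. The rectangle $R$ consists of $l$ complete row blocks followed by $r$ filled rows of the $(l+1)$st row block, so I would first complete the $(l+1)$st row block to obtain an $(l+1)k\times n$ Sudoku rectangle. Since $l<k$ we have $l+1\le k$, so this intermediate object is either the full Sudoku square (when $l+1=k$) or a rectangle made up of complete row blocks, which Theorem~\ref{thm:fullblocks} completes. Thus the entire difficulty is concentrated in filling the $k-r$ empty rows of the $(l+1)$st row block, and I would do this in the same two steps: first assign numbers to columns so that the column and block conditions hold, and then distribute them among the empty rows so that the row condition holds.

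For the first step I would treat each block $B_d$ of the $(l+1)$st row block separately, exactly as in Theorem~\ref{thm:fullblocks}, and assign the $k(k-r)$ numbers missing from $B_d$ to its $k$ columns so that each column receives exactly $k-r$ of them and no number is placed in a column where it already occurs. A missing number $i$ appears exactly once in the $d$th column block within each of the $l$ complete row blocks above, and nowhere else among these $k$ columns; by the column condition these $l$ appearances lie in $l$ distinct columns, so $i$ is blocked from exactly $l$ columns and is assignable to $k-l$ of them. Dually each column must receive $k-r$ numbers, and the two totals balance. I would phrase this, as before, as a feasible-flow (bipartite $b$-matching) problem. The new feature is that the uniform fractional flow used in Theorem~\ref{thm:fullblocks} no longer balances the column demands, so feasibility cannot be read off directly and must be argued through a Hall/cut estimate; this is exactly where the hypothesis $(k-r)(k-l)\ge lk$ must enter, and I expect it to be the main obstacle.

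The crux, then, is to verify Hall's condition for saturating the missing numbers: for every set $U$ of columns, the set $S_U$ of missing numbers all of whose assignable columns lie in $U$ should satisfy $|S_U|\le(k-r)\,|U|$. A number lies in $S_U$ only if its $l$ blocked columns contain all of $U^c$, so $S_U$ is empty unless $|U^c|\le l$, i.e.\ $|U|\ge k-l$. When $S_U$ is nonempty, each of its numbers occupies exactly one cell in each column of $U^c$ within the top $lk$ rows; these cells are distinct across numbers, and that region has $|U^c|\cdot lk$ cells, which forces $|S_U|\le lk$. Combining the two observations, for every relevant $U$ we get $|S_U|\le lk\le(k-r)(k-l)\le(k-r)\,|U|$, which is precisely Hall's condition; an integral assignment then exists by flow integrality. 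This counting bound, together with the threshold $(k-r)(k-l)\ge lk$, is the heart of the argument.

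The second step and the recursion are then routine. Each number appears in exactly $r$ of the $k$ blocks of the $(l+1)$st row block (once per filled row, in distinct blocks by the block condition), hence is assigned in the first step to exactly one column in each of the $k-r$ blocks missing it. Consequently the bipartite graph between the $n$ numbers and the $n$ columns of the $(l+1)$st row block, with an edge for each assignment, is $(k-r)$-regular; by K\"onig's theorem it admits a proper $(k-r)$-edge-coloring, and placing the numbers of color $c$ into the $c$th empty row yields a valid completion of the row block, just as in Theorem~\ref{thm:fullblocks}. Invoking Theorem~\ref{thm:fullblocks} on the resulting $(l+1)k\times n$ rectangle completes the square.
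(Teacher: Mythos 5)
Your proposal is correct and follows essentially the same two-stage approach as the paper's proof: a block-by-block $1$-to-$(k-r)$ assignment verified via Hall's condition (with the hypothesis $(k-r)(k-l)\ge lk$ entering in exactly the same way), then a K\"onig edge-coloring to distribute the assigned numbers among the $k-r$ empty rows, and finally completion via Theorem~\ref{thm:fullblocks}. The only (cosmetic) difference is that you check Hall's condition from the element side, bounding the confined set $S_U$ by $lk$ via cell counting, whereas the paper bounds $|N(S)|$ from below using column degrees; your cell-counting bound is vacuous only in the trivial case $U=[k]$, where Hall's inequality holds with equality since $|S_U|=k(k-r)=(k-r)|U|$.
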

\begin{proof}
We proceed the same way as in the proof of Theorem \ref{thm:fullblocks}. The empty rows of the $(l+1)$st 
row block form a $(k-r) \times  n$  rectangle $R_1$. First, we show
that for each column in $R_1$, we can pick $k-r$ distinct elements in such a
way that the elements chosen for the columns in each block are
distinct, and the $k-r$ elements for each column do not appear in that
column in $R$. Next, we show that the elements in each column can be permuted
in such a way that the elements in each row become distinct. Note that
for the second step exactly the same proof as in Theorem~\ref{thm:fullblocks}
works. Therefore, we only need to prove the first step. Also, note
that the first step can be done block-by-block. So, it is enough to
consider one block $B_d$, and prove the following: 
for each column in $B_d$ we
can pick $k-r$ elements distinct from elements that appear in $B_d$ and in the cells of the same column above $B_d$ in such a way that the elements picked for the $k$ columns in
$B_d$ are distinct.

We formulate the problem as a matching problem as follows: On one
side, we have $k$ vertices corresponding to the $k$ columns in the
block $B_d$. We call these the column vertices. On the other side, we have
$n-rk$ vertices, corresponding to the elements that do not appear in
the $r$ filled rows of the block. We call these the element
vertices. There is an edge between a column vertex and an element
vertex if the corresponding element does not appear in the
corresponding column in $R$. The degree of each element vertex is
precisely $k-l$, since out of the $k$ columns, it appears in $l$ of
them (one for each block above this block). The degree of each column
vertex is a number between $n-lk-rk$ and $n-lk$. We prove that there
is a 1-to-$(k-r)$ matching in this graph, i.e., a matching which
matches each column vertex to precisely $k-r$ element vertices, and
each element vertex to precisely one column vertex.

We prove this by verifying the following easy extension of Hall's condition~\cite{MR1871828}
for the existence of a 1 to $(k-r)$ matching. 
This condition states that such a matching exists
if and only if for every set $S$ of column vertices, $|N(S)|\ge
(k-r)|S|$, where $N(S)$ is the set of all element vertices adjacent to
at least one vertex in $S$.  To prove this, we consider two cases:
either $|S|>l$ or $|S| \le l$. In the first case, consider an element
$x$ that does not appear in the $r$ filled rows of the block. Since
$x$ appears in exactly $l$ columns in the corresponding column block in $R$ and
$|S|>l$, there is at least one column in $S$ that does not contain $x$. 
Therefore, the corresponding column vertex is adjacent to the element vertex $x$. This means that in this case,
$N(S)$ is the set of all $n-rk$ element vertices. Hence,
$|N(S)|=k(k-r)\ge|S|(k-r)$, as desired. In the second case ($|S| \le
l$), we use the fact that the degree of each column vertex in this
graph is at least $n-lk-rk$, and therefore, $|N(S)|\ge n-lk-rk$. Using
this inequality, we have:

$$|N(S)|\ge k^2-lk-rk = (k-r)(k-l) - rl \ge  lk - rl \ge (k-r)|S|.$$

This completes the proof of Hall's condition, which implies that to
each column we can assign $k-r$ elements distinct from elements that
appear in the $r$ filled rows of the block, such that each such
element is assigned precisely to one column.
\end{proof}

\section{A Complete Characterization}
\label{sec:characterization}
In this section, we give some Sudoku rectangle constructions to prove
that the union of the three sufficient conditions given in
Section~\ref{sec:sufficient} and Theorem \ref{thm:lastblock} is also
necessary.

\begin{theorem}
\label{thm:main}
Assume $n=k^2$ and $m =  lk+r$ for some $l< k$ and $0 \le r <k$. Every
$m\times n$ Sudoku rectangle $R$ can always be completed to a Sudoku
square, if and only if at least one of the following conditions hold:
\begin{itemize}
\item $l = k-1$,
\item $r = 0$, or
\item $(k-r)(k-l)\ge lk$.
\end{itemize}
\end{theorem}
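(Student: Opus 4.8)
The plan is to handle the two directions separately. The forward (sufficiency) direction is immediate: if $l=k-1$ then $m=lk+r=n-k+r$ with $0\le r<k$, so $n-k\le m<n$ and Theorem~\ref{thm:lastblock} applies; if $r=0$ then $m=lk$ and Theorem~\ref{thm:fullblocks} applies; and if $(k-r)(k-l)\ge lk$ then Theorem~\ref{thm:thirdcase} applies directly. Thus every $m\times n$ Sudoku rectangle is completable whenever one of the three conditions holds.

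For the necessity direction, I would first record the reduction underlying the whole argument. Any completion of $R$ to a Sudoku square must, in particular, fill the empty cells of each block $B_d$ of the $(l+1)$st row block using the $n-rk$ symbols still missing from $B_d$, placing $k-r$ of them in each column and avoiding, in every column, the symbols already present above. This is exactly the $1$-to-$(k-r)$ matching problem formulated in the proof of Theorem~\ref{thm:thirdcase}. Hence if one can build a valid $m\times n$ Sudoku rectangle for which this matching problem has no solution for even a single block $B_d$, that block cannot be filled and $R$ is non-completable. The task therefore reduces to violating Hall's condition for one block.

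Assuming all three conditions fail (so $l\le k-2$, $r\ge 1$, and $(k-r)(k-l)<lk$), I would engineer the obstruction inside one column block. Fix a set $S$ of $l$ of its $k$ columns and, recalling from the proof of Theorem~\ref{thm:thirdcase} that each symbol occurs above $B_d$ in exactly $l$ columns, note that Hall's condition fails at $S$ precisely when the number $a$ of symbols whose set of occurrence-columns equals $S$ (and which are missing from the $r$ filled rows of $B_d$) exceeds $(k-r)(k-l)$. I would then fill the part of the column block above $B_d$ so that $a$ symbols appear only in the $S$-columns, while choosing the $r$ filled rows of $B_d$ to avoid these $a$ symbols; a packing argument shows that one can realize any $a$ up to $k\min(l,k-l)$, and since $(k-r)(k-l)<k\min(l,k-l)$ under the failed conditions (using $(k-r)(k-l)<lk$ when $2l\le k$ and $r\ge 1$ when $2l>k$), a value $a>(k-r)(k-l)$ is attainable. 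Finally I would invoke the constructive lemma to extend this single bad column block to a full, valid $m\times n$ Sudoku rectangle, filling the remaining column blocks and the remaining rows of the $(l+1)$st band without disturbing the obstruction.

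The main obstacle is the feasibility of the gadget, specifically keeping all $a$ special symbols out of the $r$ filled rows of $B_d$ so that they remain genuine missing symbols. The column condition forces the entries of $B_d$'s filled rows in the $S$-columns to avoid the special symbols automatically, but in the non-$S$ columns the special symbols must be excluded by hand, which requires enough non-$S$ columns to absorb the non-special symbols without column repetitions. This is exactly where the hypotheses $l\le k-2$ and $r\ge 1$ are used, and it is instructive that the construction breaks when $l=k-1$: then there is a single non-$S$ column, the column condition forces a special symbol into $B_d$, $a$ drops to at most $(k-r)(k-l)$, and indeed Theorem~\ref{thm:lastblock} guarantees completability. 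Verifying this feasibility carefully, together with the extension step of the constructive lemma, is the crux of the proof; the $5\times 9$ rectangle of Figure~\ref{fig:example} is the instance $k=3,\,l=1,\,r=2$ of the resulting construction.
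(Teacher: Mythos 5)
Your overall architecture is the paper's own: sufficiency from Theorems~\ref{thm:lastblock}, \ref{thm:fullblocks}, \ref{thm:thirdcase}, and necessity by building a non-completable $(m,k,n)$-Sudoku rectangle and extending it with Lemma~\ref{lem:fullblocksgen}. Your reformulation of non-completability as a Hall violation is also correct: with $|S|=l$, failure of Hall's condition at $S$ is indeed equivalent to having $a>(k-r)(k-l)$ symbols whose occurrence-column set is exactly $S$ and which avoid the $r$ filled rows of $B_d$. The gap is that the entire difficulty of the theorem is compressed into the sentence ``a packing argument shows that one can realize any $a$ up to $k\min\{l,k-l\}$,'' which you do not prove and which is in fact \emph{false} as stated, precisely in the regime where you need it. If $2l\ge k$, $r\ge 1$, and $a=k(k-l)$, then a cell count forces every one of the $k^2-a=lk$ non-special symbols to occur in \emph{every} one of the $k-l$ non-$S$ columns above $B_d$ (the $(k-l)lk$ non-$S$ cells can only be covered with equality), so no symbol at all can legally be placed in the non-$S$ cells of the $r$ filled rows of $B_d$: special symbols are excluded by hypothesis and non-special ones by the column condition. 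Hence no such Sudoku rectangle exists. The claim also fails in your other case: if $2l<k$ but $l+r>k$ (e.g.\ $k=5$, $l=2$, $r=4$), realizing $a=lk$ leaves only $k(k-l)<rk$ non-special symbols to fill the $rk$ cells of the filled rows, which the block condition forbids. So what actually has to be shown is that the single value $a=(k-r)(k-l)+1$ (or some value in the narrow window above $(k-r)(k-l)$) is realizable by a valid $(m,k,n)$-Sudoku rectangle, and nothing in your proposal establishes this.

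Producing that realization is exactly the hard content of the paper's proof, and the paper's constructions show how little slack there is. For $l<k/2$ the paper does realize your extremal gadget $a=lk$ via Lemma~\ref{lem:2}, but when $l+r>k$ it must perform a ``switch'' of elements between the two constituent rectangles, after which some special symbols do land in the filled rows and the violation survives only because $k(k-r)>(k-r)(k-l)$ still holds. For $l\ge k/2$ the paper abandons column sets of size $l$ altogether: it works with the first $\lfloor k/2\rfloor$ columns, composes four rectangles from Lemma~\ref{lem:2} with carefully permuted partitions, and perturbs the result by two swaps (the elements $x$, $x_1$, $x_2$); the resulting Hall violation has margin exactly $1$, mirroring the boundary obstruction above. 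Your reduction is a clean and correct way to organize the problem, your diagnosis of why $l=k-1$ must be excluded is sound, and your identification of Figure~\ref{fig:example} as the instance $k=3$, $l=1$, $r=2$ is right; but without a proof of the feasibility claim the necessity direction is not proved, and the claim cannot be proved in the generality in which you invoke it.
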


We proved sufficiency of the above conditions in the previous section. 
To prove necessity, we need to construct an $m\times n$ Sudoku
rectangle that is not completable for any value of $m$ that does not
satisfy the conditions of the theorem. The main tool we use in this
construction is the following extension of
Theorem~\ref{thm:fullblocks}, which proves that any 
$(m, k, n)$-Sudoku rectangle can be extended to an $m\times n$ Sudoku
rectangle. This means that to construct an $m\times n$ Sudoku
rectangle that is not completable, it is enough to construct its first
column block, i.e., an $(m, k, n)$-Sudoku rectangle that is not
completable.

\begin{lemma}
\label{lem:fullblocksgen}
For every $n=k^2$, $m\le n$, every $(m, k, n)$-Sudoku rectangle
$R$ can be extended to an $(m, n, n)$-Sudoku rectangle.
\end{lemma}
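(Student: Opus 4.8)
The plan is to mimic the two-stage strategy of the proof of Theorem~\ref{thm:fullblocks}, but applied \emph{horizontally} to fill the missing column blocks of the first $m$ rows. Writing $m = lk + r$ with $0\le r < k$, the first $m$ rows consist of some complete row blocks together with the top $r$ rows of the next row block (when $r>0$). I would split the construction into two levels. At the \emph{set level} I decide, for every row $\rho\le m$ and every column block $b$, which $k$-element set $S_{\rho,b}\subseteq[n]$ of values occupies the cell of row $\rho$ in column block $b$; the set $S_{\rho,1}$ is prescribed by $R$. These sets must satisfy a \emph{row} condition (for each $\rho$ the sets $S_{\rho,1},\dots,S_{\rho,k}$ partition $[n]$) and a \emph{block} condition (inside each column block the sets belonging to a complete row block partition $[n]$, and those belonging to the partial row block are pairwise disjoint). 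Then, column block by column block, I realise these sets by a K\"onig edge colouring: for fixed $b$ form the bipartite graph on the $m$ rows and the $n$ values with an edge $(\rho,v)$ whenever $v\in S_{\rho,b}$, observe that it has maximum degree $k$ (each row has degree $k$ and each value degree at most $k$), and colour its edges with $k$ colours, using colour $c$ to place a value in the $c$th column of the block. Exactly as in Theorem~\ref{thm:fullblocks}, properness makes every colour class a matching, so the row, column and block conditions hold simultaneously; note that this single colouring settles the column condition even across the interface between complete and partial row blocks. Hence the whole problem reduces to the set-level assignment.

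The key simplification is that the set-level conditions do not couple different row blocks: both the row condition and the block condition are internal to one row block. For a \emph{complete} row block I must complete a $k\times k$ array of $k$-sets whose rows and whose columns each partition $[n]$, with the first column given. This always succeeds: assign to each value $v$ the cyclic permutation (of rows into column blocks) that sends the prescribed row of $v$ to column block $1$; since exactly $k$ values are prescribed to each row, this produces $k$ copies of each of the $k$ cyclic permutations, and these cover every cell exactly $k$ times while respecting the prescribed first column.

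The main obstacle is the \emph{partial} row block. Here I must complete an $r\times k$ array of $k$-sets whose rows partition $[n]$ and whose columns are pairwise \emph{disjoint} (not partitions, since $r<k$), with the first column prescribed by the disjoint sets $S_{1,1},\dots,S_{r,1}$. I would phrase this as an edge colouring of the complete bipartite graph between the $n$ values and the $r$ rows, using the $k$ column blocks as colours, required to be proper at every value vertex (this is exactly the disjointness of columns, and is possible since each value vertex has degree $r\le k$), to use each colour exactly $k$ times at every row vertex, and to agree with the prescribed colour-$1$ edges. The subtlety — and the reason a naive block-by-block greedy can fail — is that the column blocks cannot be chosen one at a time: two rows of the partial block may end up with identical value sets, after which the last column block admits no valid assignment. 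I would instead choose all column blocks at once: deleting the prescribed colour-$1$ edges leaves a bipartite graph in which every row vertex has degree $k(k-1)$ and every value vertex degree at most $r\le k-1$, and a balanced (equitable) $(k-1)$-edge colouring of it meets all the requirements; adding colour $1$ back completes the array. Equivalently, the same feasibility can be obtained from a defect version of Hall's condition, and verifying it against an adversarial prescribed first column is the real content of the lemma.

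Finally I would reassemble: the complete-row-block and partial-row-block set assignments together supply all the $S_{\rho,b}$, and the per-column-block K\"onig colourings of the first step turn them into actual entries. Because the construction only ever fills the first $m$ rows, the result is precisely an $(m,n,n)$-Sudoku rectangle extending $R$, as required. I expect essentially all the difficulty to sit in the partial row block, namely in guaranteeing the balanced edge colouring (or the defect Hall condition) for an arbitrary prescribed first column block.
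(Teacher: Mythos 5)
Your proposal is correct, but it takes a genuinely different route from the paper's. The paper handles the troublesome partial row block by \emph{padding}: it appends $k-r$ extra rows containing the $n-rk$ missing elements (temporarily ignoring the column condition inside the first column block, which is legal because those rows are deleted at the end), so that every row block becomes complete; it then extends greedily, column block by column block, each step being a per-row-block Hall/flow matching followed by a K\"{o}nig edge colouring, exactly as in Theorem~\ref{thm:fullblocks}. The padding is precisely what forbids the bad greedy choices you identified (two rows of the partial block acquiring identical value sets): with the dummy rows present, the stage-one graphs stay biregular and never get stuck. You instead keep only the $m$ genuine rows, isolate a set-level problem, and solve the partial row block \emph{globally}: a balanced $(k-1)$-edge colouring of $K_{n,r}$ minus the prescribed edges chooses all remaining column blocks simultaneously, which sidesteps the greedy failure rather than forbidding it; your cyclic-shift construction for complete row blocks is also more explicit than the paper's matching argument. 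The one ingredient you invoke beyond the paper's toolkit is the balanced (equitable) edge-colouring theorem for bipartite graphs (de Werra); it is a classical result, and in your specific setting it even reduces to K\"{o}nig's theorem: split each row vertex, of degree $k(k-1)$, into $k$ sub-vertices of degree $k-1$ each; the resulting bipartite graph has maximum degree $k-1$, and a proper $(k-1)$-edge colouring of it uses each colour exactly $k$ times at each row and at most once at each value, which is exactly the balance you need. What the paper's route buys is economy, since it reuses the machinery of Theorem~\ref{thm:fullblocks} wholesale; what yours buys is a clean separation of the combinatorial core (the partial row block) from the routine bookkeeping, together with an explicit, matching-free construction for the complete row blocks.
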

\begin{proof}
We follow an algorithm similar to the one used in the proof of
Theorem~\ref{thm:fullblocks} to extend $R$, column block by column
block. Before starting this algorithm, first we append a few rows to $R$ to
make sure the number of rows is a multiple of $k$: Let $m=lk + r$,
where $0\le r< k$. We append $k-r$ rows to $R$, containing the $n-rk$
elements that do not appear in rows $lk + 1$ through $lk + r$ in $R$
in an arbitrary order. This converts $R$ into an $(l+1)k\times k$
rectangle $R'$ that satisfies the row and block conditions, but might
violate the column condition (i.e., $R'$ is not necessarily a partial
Sudoku square). Now, we append a new column block to this rectangle
using a two-stage process as follows: In the first stage, for each row
in $R'$, we are going to pick $k$ elements among elements that do not already
appear in this row in $R'$, such that sets picked for rows within the
same row block are disjoint. This is done by constructing a graph
similar to the one constructed in the proof of
Theorem~\ref{thm:fullblocks} for each row block of $R'$, and proving
it contains a $k$-to-$1$ matching. The graph contains $k$ row vertices
corresponding to the rows within this row block, and $n$ element
vertices corresponding to the elements $1$ to $n$. There is an edge
between row vertex $i$ and element vertex $j$ if $j$ does not appear
in the $i$th row of $R'$. Since $R'$ satisfies row and block constraints, the
degree of each row vertex in this graph is precisely $n-k$ and the
degree of each element vertex is $k-1$. Therefore, using essentially
the same argument as in the proof of Theorem~\ref{thm:fullblocks}, we
can show that this graph contains a matching that matches each row
vertex to precisely $k$ element vertices and each element vertex to
exactly one row vertex. In the second stage, we assign each of the
$k$ elements assigned to each row to one of the $k$ columns within the
new column block, in such a way that the column condition is
satisfied. Again, this is done by building a bipartite graph as in the
proof of Theorem~\ref{thm:fullblocks}. The graph contains $(l+1)k$ row
vertices, corresponding to the rows of $R'$, and $n$ element
vertices. There is an edge between an element vertex $i$ and a row
vertex $j$ if the element $i$ is assigned to the row $j$ in the first
stage. The maximum degree of vertices in this graph is $k$. Therefore,
the graph is $k$-edge colorable. We assign each color to one column of
the new column block, and place an element $i$ in row $j$ in column
$c$ of this column block, if the edge between $i$ and $j$ is colored
with the color corresponding to $c$. This ensures that the vertices
assigned to the same column are distinct. Therefore, we obtain an
$(l+1)k\times 2k$ extension of $R'$ that satisfies block and row
conditions, and satisfies the column conditions except possibly for
the first $k$ columns. We can continue this process, column block by
column block, resulting in an $(l+1)k\times n$ extension of $R'$ that
satisfies row and block conditions and the column conditions except
possibly for the first $k$ columns (i.e., columns of $R'$). Removing
the last $k-r$ rows of this rectangle results in an $m\times n$
rectangle that satisfies row, block, and column conditions.
\end{proof}

The following lemma is another useful tool in our constructions.

\begin{lemma}
\label{lem:2}
For every $a, b\le k$ and every set $A$ of $k\cdot\max\{a,b\}$ elements, there is
an $(ak, b, n)$-Sudoku rectangle with elements from $A$.
\end{lemma}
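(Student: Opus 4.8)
The plan is to reduce the lemma to an explicit filling of a single $ak\times b$ array and then exhibit one algebraic formula that works uniformly whether $a\ge b$ or $a<b$. First I would observe that, since $b\le k$, all $b$ filled columns lie inside the first column block, and since the filled rows are rows $1,\dots,ak$ they cover exactly the first $a$ row blocks. Hence the only cells carrying constraints are those of the $ak\times b$ top-left array, and the only blocks containing filled cells are those at coordinates $(1,1),\dots,(a,1)$, each of which has its first $b$ columns (a $k\times b$ region) filled. So it suffices to fill an $ak\times b$ array with entries from $A$ so that the row, column, and block conditions hold, where the block condition now refers to the $a$ horizontal strips of $k$ rows each. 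A quick feasibility check shows $|A|=k\max\{a,b\}$ is exactly large enough: a column has $ak\le|A|$ cells and a block has $kb\le|A|$ cells, so distinctness within each is not ruled out by cardinality.

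For the construction I would write $M=\max\{a,b\}$ and identify $A$ with $\mathbb{Z}_M\times\mathbb{Z}_k$ (possible since $|A|=kM$). Indexing the rows of the array as $qk+s$ with $q\in\{0,\dots,a-1\}$ (the row block) and $s\in\{0,\dots,k-1\}$ (the position inside it), and the columns by $j\in\{0,\dots,b-1\}$, I would place in cell $(qk+s,\,j)$ the element
\[
\bigl((q+j)\bmod M,\ s\bigr)\in\mathbb{Z}_M\times\mathbb{Z}_k .
\]
The verification of all three conditions then follows the same dichotomy: two distinct cells either differ in the $s$-coordinate (so their $\mathbb{Z}_k$-components already differ) or share the same $s$, in which case distinctness comes down to two indices being incongruent modulo $M$. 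Concretely, within a fixed row the $b$ values $(q+j)\bmod M$ are distinct because $b\le M$; within a fixed block the same argument applies with $j$ ranging over $b$ consecutive values; and within a fixed column the $a$ values $(q+j)\bmod M$ are distinct because $a\le M$. Each of these congruence facts holds precisely because $M=\max\{a,b\}$ dominates both $a$ and $b$.

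The only real obstacle is choosing a single rule coupling the block index $q$ and the column index $j$ so that all three conditions hold at once and for both relative sizes of $a$ and $b$. A naive rule whose first coordinate depends on $j$ alone satisfies the row and block conditions but collapses the column condition as soon as $a>1$, since a whole column would then be forced to reuse the same $\mathbb{Z}_M$-component across its $a$ blocks, leaving only the $k$ available second coordinates for $ak$ cells. Setting the first coordinate to $(q+j)\bmod M$ repairs this: along a column $q$ varies over an interval of length $a\le M$, while along a row and within a block $j$ varies over an interval of length $b\le M$, so the cyclic shift separates entries in exactly the three directions required. Finally I would note that padding the remaining $n^2-akb$ cells with $*$ yields a genuine partial Sudoku square, and that relabelling $\mathbb{Z}_M\times\mathbb{Z}_k$ by an arbitrary bijection onto the given $A$ proves the statement for every set $A$ of the prescribed size.
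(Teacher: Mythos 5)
Your proof is correct and takes essentially the same approach as the paper: identifying $A$ with $\mathbb{Z}_M\times\mathbb{Z}_k$ is just a relabelling of the paper's partition of $A$ into $c=\max\{a,b\}$ classes of size $k$, and your rule placing $\bigl((q+j)\bmod M,\,s\bigr)$ in cell $(qk+s,\,j)$ is exactly the paper's placement of class $A_{(i+j)\bmod c}$ in row block $i$, column $j$, with elements in a fixed order. The verification (distinctness of class indices modulo $M$ along rows/blocks because $b\le M$ and along columns because $a\le M$) coincides with the paper's disjointness argument.
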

\begin{proof}
Let $c = \max\{a,b\}$.  We partition the elements of $A$ into $c$ sets
$A_0,A_1,\ldots,A_{c-1}$, each of size $k$. The Sudoku rectangle is
constructed as follows: We fill the elements in column $j + 1$ ($0\le
j< b$) and rows $ik + 1$ through $ik + k$ ($0\le i< a$) of the
rectangle with elements from $A_{(i + j)\mod c}$, in increasing
order\footnote{For the purpose of this lemma, elements of
  $A_{(i+j)\mod c}$ can be placed in any arbitrary order. However,
  placing them in increasing order makes it easier to combine and
  modify these rectangles, as needed in the proof of
  Theorem~\ref{thm:main}.}. Since for any fixed $i$, the family
$\{A_{(i + j)\mod c}: 1\le j\le b\}$ is a family of disjoint sets, the
block conditions (and therefore the row conditions) are
satisfied. Also, for every fixed $j$, the family $\{A_{(i + j)\mod c}:
1\le i\le a\}$ is a family of disjoint sets, and therefore the column
conditions are also satisfied.
\end{proof}

Equipped with the above lemmas, we are now ready to prove
Theorem~\ref{thm:main}:

\begin{proof}[Proof of Theorem~\ref{thm:main}]
The ``if'' part follows from Theorems~\ref{thm:lastblock},
\ref{thm:fullblocks}, and \ref{thm:thirdcase}. To prove the ``only
if'' part, let $m=lk + r$ be a number that does not satisfy any of the three conditions
(i.e., $l\le k-2$, $r \ge 1$, and $(k-r)(k-l)<lk$), and construct an
$m\times n$ Sudoku rectangle that cannot be completed to a Sudoku
square. We use Lemma~\ref{lem:fullblocksgen} in this construction as
follows: Instead of constructing an $m\times n$ Sudoku rectangle, we
construct an $(m, k, n)$-Sudoku rectangle that cannot be completed
into a Sudoku square, and use Lemma~\ref{lem:fullblocksgen} to extend
this $m\times k$ rectangle to an $m\times n$ rectangle. As the
original $m\times k$ rectangle was not completable, the $m\times n$
rectangle cannot be completable either.

We consider three cases for this construction: (a) $l < k/2$, (b)
$l\ge k/2$ and $k$ is even, and (c) $l\ge k/2$ and $k$ is odd.

\renewcommand{\labelenumi}{(\alph{enumi})}
\begin{enumerate}
\item $l < k/2$: Using Lemma~\ref{lem:2} with $a=b=l$, we can
  construct an $(lk, l, n)$-Sudoku rectangle $R_1$ with entries in
  $\{1,\ldots, lk\}$. Also, since $l + 1 \le k - l$, using
  Lemma~\ref{lem:2} with $a = l + 1$ and $b = k-l$ we obtain an $((l +
  1)k, k - l, n)$-Sudoku rectangle $R_2$ with elements in $\{lk
  + 1,\ldots, k^2\}$. The idea is to construct an $(m, k, n)$-Sudoku rectangle by concatenating the columns of $R_2$ after the
  columns of $R_1$. However, $R_1$ has $m - r$ rows and $R_2$ has $m +
  k - r$ rows, and therefore before concatenating $R_1$ and $R_2$, we
  need to add $r$ rows to $R_1$ and remove $k-r$ rows from $R_2$ to
  make sure both rectangles have $m$ rows. To do this, we first simply
  remove the extra $k - r$ rows from $R_2$. These rows contain a set
  of $(k-r)(k-l)$ elements $E$. We place these elements in an
  arbitrary order in the missing $r$ rows of $R_1$. To fill these rows
  with elements in $E$, we need $lr \le |E| = (k-r)(k-l) = lr + k(k -
  l - r)$. This holds if and only if $l + r \le k$, in which case we
  are done. If $l + r > k$, $R_1$ is still missing $lr - |E| = k(l + r
  - k)$ elements to become an $m\times l$ rectangle. We pick a set
  $E'$ of this many elements arbitrarily from rows $lk + 1$ through
  $lk + r$ of $R_2$. This is feasible, since $R_2$ has $(k-l)r$
  elements in these rows, and

$$(k-l)r = (r+l)k - (r+k)l > (r+l)k - (k+k)l > (r+l)k - 2k\cdot\frac{k}2 = k(l + r - k).$$

We now perform the following switch: The elements of $E'$ in the rows
$lk + 1$ through $lk + r$ of $R_2$ are replaced with arbitrary
elements in $1,\ldots, lk\}$ (possible since $k(l + r - k)\le  lk$), and
the missing elements of $R_1$ are filled with elements in $E'$.

It is easy to see that the rectangle $R$ obtained by concatenating the
columns of $R_1$ and $R_2$ after the above operations is an $(m,
k, n)$-Sudoku rectangle: The block conditions are satisfied since
$R_1$ and $R_2$ initially satisfied these conditions, they were
composed of disjoint sets of elements, and none of the steps above
violated the block condition. The row conditions follows directly from
the block conditions, and the column conditions are also true since
they are true for $R_1$ and $R_2$ constructed by Lemma~\ref{lem:2},
and the above operations only moved elements from $lk+1,\ldots, k^2$
to the first $l$ columns (which did not previously contain any of
these elements), and elements from $1,\ldots, lk$ to the last $k-l$
columns (which, again, did not previously contain any of these
elements).

Now, we prove that $R$ is not completable to a Sudoku square. We
consider two cases: $l+r \le k$ and $l + r > k$. In the latter case,
it is evident from the above construction that all elements in
$\{lk+1,\ldots,k^2\}$ appear in the last partial block (i.e., last $r$
rows) of $R$. Also, all elements in $\{1,\ldots, lk\}$ appear in each
of the first $l$ columns. Therefore, there is no element that can be
placed in any of the entries in columns $1$ through $l$ of the
$(m+1)$st row without violating column or block conditions.

We now consider the case $l + r \le k$. Consider the first $l$ columns
of $R$. To complete $R$, we must fill rows $m+1,\ldots,m+k-r = (l+1)k$
of these columns with $l(k-r)$ elements. Let us call the set of these
elements $S$. By block conditions, elements in $S$ must all be
distinct, and also distinct from the $rk$ elements in the first $r$
rows of this block. Furthermore, since all elements in
$\{1,\ldots, lk\}$ appear in each of the first $l$ columns, by column
conditions $S$ cannot contain any of the elements
$1,\ldots, lk$. Therefore, we must have at least $l(k - r) + rk +  lk$
distinct elements. Thus, $l(k-r) + rk + lk \le k^2$, or $lk\le k^2 -
lk - rk + lr = (k-l)(k-r)$, contradicting the third condition.

\item $l\ge k/2$ and $k$ is even: Let $F = \{1,\ldots,\frac{k^2}2\}$
  and $G = \{\frac{k^2}2 + 1,\ldots,k^2\}$. We use Lemma~\ref{lem:2}
  to construct the following four Sudoku rectangles:
\begin{itemize}
\item a $(\frac{k^2}{2}, \frac{k}2, n)$-Sudoku rectangle $R_1$ with
  entries in $F$,
\item a $(\frac{k^2}{2}, \frac{k}2, n)$-Sudoku rectangle $R_2$
  with entries in $G$,
\item an $((l+1-\frac{k}2)k, \frac{k}2, n)$-Sudoku rectangle $R_3$
  with entries in $G$, and
\item an $((l+1-\frac{k}2)k, \frac{k}2, n)$-Sudoku rectangle $R_4$
  with entries in $F$.
\end{itemize}

Recall that the construction in Lemma \ref{lem:2} starts by defining a
partition of the set of elements into subsets of size $k$. Let
$F_0,F_1,\ldots,F_{k/2-1}$ be the partition of $F$ used to construct
$R_1$, and $G_0,G_1,\ldots, G_{k / 2-1}$ be the partition of $G$ used
to construct $R_2$. To construct $R_3$, we use this permutation of the
latter partition: $G_1, G_3, G_4, \ldots, G_{k/2-1}, G_2, G_0$. In
other words, the first column of the first row block of $R_3$ contains
elements of $G_1$, the second column of this row block contains
elements of $G_3$, the third column contains $G_4$, and so on. For
$R_4$, we use the partition $F_{k/2-1}, F_{k/2-2}, \ldots, F_1, F_0$.
By placing the four rectangles $R_1, R_2, R_3$, and $R_4$ in the
following configuration, we obtain an $(l+1)k\times k$ rectangle $R$:

$$R = \left[\begin{array}{cc} R_1 & R_2 \\ R_3 & R_4 \end{array}\right]$$

It is easy to see that $R$ is a Sudoku rectangle. Next, we remove the
last $k-r$ rows of $R$ to obtain an $(m, k, n)$-Sudoku rectangle
$R'$. 

Since the partition $G_1, G_3, G_4, \ldots, G_{k/2-1}, G_2, G_0$ was
used to construct $R_3$, in the first column of this rectangle
elements of these sets appear in this order. Since $R_3$ contains
$l+1-\frac{k}{2}<\frac{k}2$ row blocks, none of the elements of $G_0$
can appear in the first column of $R_3$ (and thereofre in the first
column of $R'$). Similarly, none of the elements of $G_2$ appear in
the last column of $R_3$ (and therefore the $\frac{k}2$th column of
$R'$. The first column of $R_4$ contains elements of $F_{k/2-1},
F_{k/2-2}, \ldots$, in this order. Thus, the first column of $R_4$
(and therefore the $(\frac{k}2+1)$th column of $R'$) does not contain
any of the elements of $F_0$. In the second column of $R_4$, we have
elements of $F_{k/2-2}, F_{k/2-3}, \ldots$, in this order. If $l < k -
2$, none of the elements of $F_0$ appear in this column (and therefore
the $(\frac{k}2+2)$th column of $R'$). If $l = k - 2$, elements of
$F_0$ appear in the last row block in the second column of $R_4$. But
then since $R'$ is obtained from $R$ by removing its last $k-r > 0$
rows, the {\em last} element of $F_0$ does not appear in the
$(\frac{k}2+1)$th column of $R'$. Let $x$ denote the last element of
$F_0$, $x_1$ denote the last element of $G_0$, and $x_2$ denote the
last element of $G_2$. By the above arguments, we know that $x$ does
not appear in $(\frac{k}2+1)$th and $(\frac{k}2+2)$th columns of $R'$,
$x_1$ does not appear in its first column, and $x_2$ does not appear
in its $\frac{k}2$th column. Also, by the construction of $R_1$ and
$R_2$, $x$ appears in the first column of the first row block and the
$\frac{k}2$th column of the second row block of $R'$, $x_1$ appears in
the $(\frac{k}2+1)$th column of the first row block of $R'$, and $x_2$
appears in the $(\frac{k}2+2)$th column of the second row block of
$R'$. Finally, since $x_1$, and $x_2$ are the last elements of the
sets $G_0$ and $G_2$, they can only appear in the last row in any row
block in $R$. Therefore, since $R'$ is obtained from $R$ by removing
the last $k-r \ge 1$ rows, neither $x_1$ nor $x_2$ appear in the last
row block of $R'$.

We now perturb the rectangle $R'$ to obtain another rectangle that is
not completable. We do this by first swapping $x$ and $x_1$ in the
first row block of $R'$, and $x$ and $x_2$ in the second row block of
$R'$. By the above observations, these swaps do not violate the Sudoku
conditions. Furthermore, after these swaps, the $(\frac{k}2+1)$th
column of the rectangle does not contain $x_1$ and its
$(\frac{k}2+2)$th column does not contain $x_2$. Therefore, we can
change the element in row $lk + 1$ and column $(\frac{k}2+1)$ to
$x_1$, and the element in row $lk + 1$ and column $(\frac{k}2+2)$ to
$x_2$. Again, it is easy to see that Sudoku conditions are
preserved. Let $R''$ denote the resulting $(m, k, n)$-Sudoku
rectangle.

We argue that $R''$ is not completable. Consider the first $k/2$
columns of $R''$. To complete $R''$, we must place $\frac{k}2(k-r)$
distinct elements in rows $m+1, \ldots, m+k-r = (l+1)k$ of these
columns. Let $T$ denote the set of these elements. Every element in
$F\setminus\{x\}$ appears in each of the first $k/2$ columns of
$R''$. Therefore, $T$ cannot contain any elements of $F$, except
possibly $x$. Also, it cannot contain any of the elements of $G$ that
appear in the last row block of $R''$. There are precisely
$r\cdot\frac{k}2 + 2$ such elements, namely the $r\cdot\frac{k}2$
elements of $G$ that appear in the first $k/2$ columns of the last row
block of $R''$, and $x_1$ and $x_2$. This means that there are at most
$\frac{k}2(k-r)-2$ elements of $G$ that can be in $T$. Therefore, $T$
cannot contain more than $\frac{k}2(k-r)-1$ elements, which is a contradiction.

\item $l\ge k/2$ and $k$ is odd: The high-level idea of the
  construction is similar to the one in case (b): we construct an
  $(l+1)k \times k$ rectangle $R$ by composing four smaller
  rectangles, truncate it to get an $m\times k$ rectangle $R'$, and
  then slightly perturb it to get an $m\times k$ rectangle $R''$ that
  is not completable.

Let $F = \{1,\ldots,k\lceil\frac{k}2\rceil\}$, $G =
\{k\lceil\frac{k}2\rceil + 1,\ldots,k^2\}$, and $H=\{k^2 + 1,\ldots,
k^2 + k\}$.  Note that $|F| = k\lceil\frac{k}2\rceil$, $|G| =
k\lfloor\frac{k}2\rfloor$, and $|G\cup H| = k\lceil\frac{k}2\rceil$.
We construct the following Sudoku rectangles using Lemma~\ref{lem:2}:

\begin{itemize}
\item a
  $(k\lceil\frac{k}{2}\rceil, \lfloor\frac{k}2\rfloor, n)$-Sudoku rectangle $R_1$ with
  entries in $F$, 
\item a $(k\lceil\frac{k}{2}\rceil, \lceil\frac{k}2\rceil, n)$-Sudoku rectangle $R_2$
  with entries in $G\cup H$.
\item an $((l+1-\lceil\frac{k}{2}\rceil)k, \lfloor\frac{k}2\rfloor, n)$-Sudoku rectangle $R_3$
  with entries in $G$, and
\item an $((l+2-\lceil\frac{k}{2}\rceil)k, \lceil\frac{k}2\rceil, n)$-Sudoku rectangle $R_4$
  with entries in $F$.
\end{itemize}
Recall that the construction in Lemma \ref{lem:2} starts by defining a
partition of the set of elements into subsets of size $k$. Let
$F_0,F_1,\ldots,F_{\lceil k / 2 \rceil-1}$ be the partition of $F$
used to construct $R_1$, and $H, G_0,G_1,\ldots, G_{\lfloor k / 2
  \rfloor-1}$ be the partition of $G\cup H$ used to construct
$R_2$. Without loss of generality, we assume that the partition
$G_{\lfloor k / 2 \rfloor-1}, G_{\lfloor k / 2 \rfloor-2}, \ldots,
G_0$ is used to construct $R_3$, and the partition $F_{\lceil k / 2
  \rceil-1}, F_{\lceil k / 2 \rceil-2}, \ldots, F_1, F_0$ is used to
construct $R_4$. This means that the first row block of $R_1$ contains
elements of $F_0$ in its first column, $F_1$ in the second column, and
$F_{\lceil k / 2\rceil - 2}$ in its last (i.e., $\lfloor k /
2\rfloor$th) column. Therefore, the first row block of $R_1$ does not
contain any of the elements of $F_{\lceil k / 2\rceil -
  1}$. Similarly, the second row block of $R_1$ does not contain any
element in $F_0$, the third row block does not contain any element in
$F_1$, and so on. We now modify $R_2$ as follows: Replace elements of
$H$ in the first row block of $R_2$ by elements of $F_{\lceil k /
  2\rceil - 1}$, the elements of $H$ in the second row block of $R_2$
by elements of $F_0$, and so on. Let $R'_2$ denote the resulting
rectangle. All entries of $R'_2$ are in $F\cup G$. Note that by the
construction in Lemma~\ref{lem:2}, elements of $H$ appear in the first
column of $R_2$ in the first row block, the last (i.e., $\lceil k /
2\rceil$th) column in its second row block, \ldots, and the second
column in its last (i.e., $\lceil k / 2\rceil$th) row block. This
means that the set of elements appearing in the first column of $R'_2$
is $G\cup F_{\lceil k / 2\rceil - 1}$, in its second column is $G\cup
F_{\lceil k / 2\rceil - 2}$, and so on. On the other hand, the set of
elements appearing in the $i$'th column of the first row block of
$R_4$ is precisely $F_{\lceil k / 2\rceil - i}$. Therefore, if we
remove the first row block of $R_4$, we obtain an
$((l+1-\lceil\frac{k}{2}\rceil)k, \lfloor\frac{k}2\rfloor, n)$-Sudoku 
rectangle $R'_4$ with elements in $F$ such that the $i$'th
column of $R'_4$ does not contain any of the elements of $F_{\lceil k
  / 2\rceil - i}$. We are now ready to define an $(l+1)k\times k$
rectangle $R$ by placing the four rectangles $R_1, R'_2, R_3$, and
$R'_4$ as follows:

$$R = \left[\begin{array}{cc} R_1 & R'_2 \\ R_3 & R'_4 \end{array}\right]$$

Given the above construction, it is easy to see that $R$ is a
Sudoku rectangle with elements in $F\cup G$. Let $R'$ denote the
$(m, k, n)$-Sudoku rectangle obtained by removing the last $k-r$
rows of $R$. The last step is to perturb $R'$ by moving a few elements
in such a way that the resulting rectangle is not completable.

Recall that the partition $G_{\lfloor k / 2 \rfloor-1}, G_{\lfloor k /
  2 \rfloor-2}, \ldots, G_0$ was used to construct $R_3$. Therefore,
in the first column of $R_3$, we have the elements in $G_{\lfloor k /
  2 \rfloor-1}, G_{\lfloor k / 2 \rfloor-2}, \ldots, G_s$, where $s =
\lfloor k / 2 \rfloor-(l+1-\lceil\frac{k}{2}\rceil) = k - 1 - l >
0$. Therefore, none of the elements in $G_0$ appears in the first
column of $R_3$, and therefore in the first column of $R'$. Similarly,
none of the elements of $G_{\lfloor k / 2 \rfloor-1}$ appears in the
second column of $R'$. In the last column of $R_4$, we have the
elements of $F_0, F_{\lceil k / 2 \rceil-1}, F_{\lceil k / 2
  \rceil-2}, \ldots, F_{s+1}$. Therefore, none of the elements of
$F_1$ appears in the last column of $R'$. In the first column of
$R_4$, we have elements of $F_{\lceil k / 2 \rceil-1}, F_{\lceil k / 2
  \rceil-2}, \ldots, F_{s}$. Therefore, since $R'$ is obtained by
removing the last $k-r \ge 1$ rows of $R$, the {\em last} element of
$F_1$ does not appear in the $\lceil k / 2\rceil$th column of $R'$
(which corresponds to the first column of $R_4$). Now, let $x$ denote
the last element of $F_1$, $x_1$ denote the last element of
$G_{\lfloor k / 2 \rfloor-1}$, and $x_2$ denote the last element of
$G_0$. To summarize, the above arguments, we know that $x$ does not
appear in neither the last nor the $\lceil k / 2\rceil$th column of
$R'$, $x_1$ does not appear in the second column of $R'$, and $x_2$
does not appear in the first column of $R'$. Furthermore, by the
definition of $R_1$ and $R_2$, $x$ appears in the second column of the
first row block of $R'$ and the first column of the second row block
of $R'$ (where all elements of $F_1$ are listed), $x_1$ appears in the
last column of the first row block of $R'$ (where all the elements of
$G_{\lfloor k / 2 \rfloor-1}$ are listed), and $x_2$ appears in the
$\lceil k / 2\rceil$th column of the second row block of $R'$ (where
all the elements of $G_0$ are listed). Finally, since $x_1$ and $x_2$
are the last elements of $G_{\lfloor k / 2 \rfloor-1}$ and $G_0$, they
can only appear in the last row in any row block in $R$. Therefore,
since $R'$ is obtained from $R$ by removing the last $k-r \ge 1$ rows,
neither $x_1$ nor $x_2$ appear in the last row block of $R'$.

We are now ready to perform the swaps that would make $R'$ not
completable. In the first row block of $R'$, we swap $x$ and $x_1$
(which appear in the second and the last columns). In the second row
block, we swap $x$ and $x_2$ (occuring in the first and the $\lceil k
/ 2\rceil$th column). By the above observations, these swaps do not
violate the Sudoku conditions.  After these swaps, there is no $x_1$
in the last column of the rectangle. Therefore, we can replace the
element in the row $lk + 1$ of this column by $x_1$. Similarly, we can
replace the element in the row $lk + 1$ of the $\lceil k / 2\rceil$th
column by $x_2$. Let $R''$ denote the resulting $(m, k, n)$-Sudoku
rectangle.

We now prove that $R''$ is not completable. To see this, consider the
first $\lfloor k / 2\rfloor$ columns, and notice that to complete
$R''$, we need to place $(k-r)\lfloor k / 2\rfloor$ distinct elements
in rows $m + 1$ through $m + k - r$ of these columns. Let $T$ denote
the set of these elements.  Since all elements in $F\setminus\{x\}$
appear in each of the first $\lfloor k / 2\rfloor$ columns of $R''$,
we must have $T\subseteq G\cup\{x\}$. On the other hand, by the block
condition, none of the elements in $T$ must appear in the last row
block of $R''$. This row block contains precisely
$r\lfloor\frac{k}2\rfloor$ elements of $G$ in its first $\lfloor k /
2\rfloor$ columns and two elements of $G$ (namely, $x_1$ and $x_2$) in
the remaining columns. Therefore, $T$ cannot contain more than
$(k-r)\lfloor k / 2\rfloor - 1$ distinct elements, which is a
contradiction.
\end{enumerate}
\end{proof}

\section{Algorithms for Sudoku Rectangle Completion}
\label{sec:alg}

The two-stage procedure used in the proof of Theorem~\ref{thm:thirdcase} is essentially
a polynomial-time algoritm for completing a Sudoku rectangle into a Sudoku square. 
Furthermore, we note that the same procedure applies for every value of $m$, and not just
for those for which Theorem~\ref{thm:thirdcase} guarantees the existence of a Sudoku square.
This is a simple observation, and follows from the fact that if the given Sudoku rectangle $R$ has
a valid extension, then this extension gives a perfect 1-to-$(k-r)$ matching in the graph constructed
in the first stage. On the other hand, if $R$ is not completable, then the graph constructed in the 
first stage does not have any perfect 1-to-$(k-r)$ matching, since if it did, this matching can be 
turned into an extension to a Sudoku square using the method outlined in the proof of 
Theorem~\ref{thm:thirdcase}. Therefore, we have the following result.

\begin{theorem}
There is a polynomial time algorithm that given an $m\times n$ Sudoku
rectangle $R$, finds a completion of $R$ to a Sudoku square, or
decides that $R$ is not completable to a Sudoku square.
\end{theorem}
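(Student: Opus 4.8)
The plan is to turn the constructive two-stage procedure from the proof of Theorem~\ref{thm:thirdcase} into an algorithm and to observe that each of its primitives runs in polynomial time. Recall that $R$ is an $m\times n$ Sudoku rectangle with $m=lk+r$, so it consists of $l$ full row blocks followed by a partial $(l+1)$st row block whose first $r$ rows are filled and whose last $k-r$ rows are empty. The first thing I would do is, for each of the $k$ blocks $B_1,\dots,B_k$ of this partial row block, build the bipartite graph between the $k$ column vertices and the eligible element vertices exactly as in the proof of Theorem~\ref{thm:thirdcase}, and search for a $1$-to-$(k-r)$ matching. This search is an instance of bipartite $b$-matching, equivalently a maximum-flow problem on a network with $O(n)$ nodes and $O(nk)$ edges; by the Ford-Fulkerson method together with the integrality of the resulting maximum flow, it is solvable in polynomial time.

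The correctness of the algorithm rests on the equivalence that $R$ is completable to a Sudoku square if and only if the first-stage matching exists in every block $B_d$. For the reverse direction I would invoke precisely the second stage of the proof of Theorem~\ref{thm:thirdcase}: collecting the per-block matchings yields an assignment $M$ under which each column receives $k-r$ elements and each element is placed in exactly $k-r$ columns, so the associated bipartite graph is $(k-r)$-regular and by K\"{o}nig's theorem is $(k-r)$-edge-colorable; the color classes specify how to distribute the chosen elements among the $k-r$ empty rows while respecting the row condition. After this step the rectangle has $l+1$ full row blocks, so if $l+1=k$ the square is already complete, and otherwise Theorem~\ref{thm:fullblocks} guarantees constructively that the remaining row blocks can be filled. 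For the forward direction I would observe that any valid completion $S$ of $R$, restricted to the $k-r$ empty rows of a block $B_d$, assigns to each column $k-r$ distinct elements that avoid the entries already appearing above it in that column and are distinct across the block; this is exactly a $1$-to-$(k-r)$ matching, so completability forces the matchings to exist.

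Combining the two directions, the algorithm is: compute the first-stage matchings for all $k$ blocks; if any block admits none, report that $R$ is not completable; otherwise run the K\"{o}nig edge-coloring step to fill the partial row block and then apply the constructive procedure of Theorem~\ref{thm:fullblocks} one row block at a time. Each component here is polynomial in $n$ -- the $O(k)$ network-flow computations, the single edge-coloring, and the $O(k)$ further matching-plus-coloring rounds used to fill the remaining full row blocks -- and the number of rounds is $O(k)=O(\sqrt{n})$, so the total running time is polynomial in $n$.

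The step I expect to be the crux is the forward direction of the equivalence, since this is what lets us certify non-completability from the mere absence of a matching. The only subtlety is verifying that a valid completion $S$ genuinely induces an assignment satisfying all three properties of $M$ (distinctness within a block, distinctness within a column, and avoidance of the cells above), rather than some weaker partial assignment. This follows directly from the row, column, and block conditions imposed on $S$, but it is the hinge on which the whole reduction turns, so I would state it carefully rather than leave it implicit.
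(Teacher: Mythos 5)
Your proposal is correct and follows essentially the same route as the paper: running the two-stage procedure of Theorem~\ref{thm:thirdcase} for arbitrary $m$, with completability equivalent to the existence of the first-stage $1$-to-$(k-r)$ matchings (forward direction by restricting a hypothetical completion, reverse direction via the K\"onig edge-coloring step followed by Theorem~\ref{thm:fullblocks}). You spell out the regularity of the second-stage graph and the polynomial running-time accounting in more detail than the paper's brief observation, but the underlying argument is identical.
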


An interesting follow-up question is whether the above result can be generalized to the case
where $R$ is a general partial Sudoku square or when $R$ is a $(p, q, n)$-Sudoku rectangle. 
We conjecture that at least in the former case, the problem is NP-complete.

\section{Asymptotics of the Number of Sudoku Squares}
\label{sec:counting}
Let $\sud(n)$ denote the number of Sudoku squares of order $n$. In
this section, we give a tight asymptotic bound on
$(\sud(n))^{1/n^2}$. Our proof uses the Sudoku rectangle completion
procedure of the Section~\ref{sec:characterization} as well as classical bounds on the
permanent of matrices. Our result is summarized in the following
theorem.

\begin{theorem}
\label{thm:count}
We have $(\sud(n))^{1/n^2} \sim e^{-3}n$ as $n\rightarrow\infty$.
\end{theorem}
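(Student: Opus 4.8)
The plan is to prove the asymptotic $(\sud(n))^{1/n^2} \sim e^{-3}n$ by sandwiching $\sud(n)$ between an upper and a lower bound, both of which we will express in terms of permanents of $0$-$1$ matrices so that classical permanent estimates apply. Recall the known asymptotic for Latin squares: the number $L(n)$ of Latin squares of order $n$ satisfies $(L(n))^{1/n^2} \sim e^{-2}n$. A Sudoku square is a Latin square subject to an \emph{additional} block constraint, so heuristically each of the $n^2$ cells carries one extra $\approx 1/e$ factor of constraint, which is exactly what turns $e^{-2}$ into $e^{-3}$. The whole proof is about making this heuristic rigorous via the permanent bounds of Minc (the upper bound) and Van der Waerden (the lower bound).

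\paragraph{Upper bound.} First I would build a row-by-row (or row-block-by-row-block) counting scheme. The key object is the permanent $\mathrm{per}(A)$ of a $0$-$1$ matrix $A$, which counts perfect matchings in the associated bipartite graph; filling one new row of a partial Sudoku square subject to row, column, and block constraints already present is exactly choosing such a matching in a suitable bipartite graph whose two sides are the $n$ cells of the new row and the $n$ symbols, with an edge precisely when placing that symbol in that cell violates no existing constraint. So $\sud(n)$ is bounded above by a product of permanents of $0$-$1$ matrices, and I would invoke \textbf{Minc's bound} $\mathrm{per}(A)\le \prod_i (r_i!)^{1/r_i}$, where $r_i$ is the $i$th row sum of $A$. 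Because each new row must avoid the symbols already in its column and in its block, the relevant row sums are controlled, and taking logarithms and using $(r!)^{1/r}\approx r/e$ converts the product into a sum that I expect to evaluate to $(e^{-3}n)^{n^2}(1+o(1))$ after accounting carefully for how the available-symbol counts decrease as rows fill up. The arithmetic here is the routine part; the care is in bookkeeping the row sums so that the block constraint contributes the correct additional logarithmic term.

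\paragraph{Lower bound.} This is where the completion procedure of Section~\ref{sec:characterization} does the real work, and I expect it to be the main obstacle. The point of the row-block-by-row-block completion procedure (Theorem~\ref{thm:fullblocks} and its generalization in Lemma~\ref{lem:fullblocksgen}) is that it guarantees \emph{many} valid ways to extend a partial Sudoku square, not just one: at each stage we solve an assignment/matching problem, and the number of valid extensions of a given stage is again a permanent of a $0$-$1$ matrix that is guaranteed (by the Hall-type conditions verified in those proofs) to be nonzero and, more strongly, to have large row sums. To lower-bound these permanents I would apply the \textbf{Van der Waerden / Egorychev--Falikman} bound $\mathrm{per}(A)\ge n!\,(d/n)^n$ for a $d$-regular (or near-regular) $0$-$1$ matrix of order $n$, or its doubly-stochastic-scaling form, to each stage of the procedure. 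Multiplying the per-stage lower bounds and again passing to logarithms should yield a lower bound matching the $(e^{-3}n)^{n^2}(1+o(1))$ from above. The delicate step is ensuring the matrices arising in the two stages of the completion procedure are regular enough (or can be regularized) for Van der Waerden's bound to be applied with the right degree $d$, so that the column and block constraints each contribute their expected $1/e$ factor and the exponent comes out to $-3$ rather than $-2$; reconciling the two-stage structure of the procedure with a clean product of permanents is the part I would expect to require the most care. Combining the two matching bounds and taking $(\cdot)^{1/n^2}$ as $n\to\infty$ then gives the claimed asymptotic.
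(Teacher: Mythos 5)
Your proposal follows the same overall strategy as the paper's proof --- sandwich $\sud(n)$ between products of bipartite perfect-matching counts produced by a sequential completion procedure, bounding each count below via Egorychev--Falikman (Van der Waerden) and above via Br\'egman (Minc) --- and your lower bound is essentially the paper's argument verbatim: build the square row-block by row-block, regularize the $1$-to-$k$ column-assignment stage by splitting each column vertex into $k$ copies (giving an $(n-k(l-1))$-regular graph with $n$ vertices per side, and a $(k!)^k$ overcounting correction), and note that the row-ordering stage decomposes into $k$-regular, $(k-1)$-regular, \dots, $1$-regular graphs; Van der Waerden applied to all stages yields $\sud(n)\ge (n/e^3)^{n^2}$. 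Where you genuinely differ is the upper bound: the paper reuses the identical two-stage row-block decomposition with maximum matching counts, whereas you fill one row at a time and apply Br\'egman to each new row's cell--symbol graph. That route does work, and is arguably cleaner than the paper's upper-bound computation (which has to fight through Stirling error terms across both stages), but the ``routine bookkeeping'' you defer is exactly where the theorem lives: the essential fact is that after $lk+r$ rows are filled, the new-row graph is \emph{exactly} $(k-l)(k-r)$-regular on the symbol side, independently of the history, because each symbol is excluded from all $k$ columns of the $r$ column-blocks whose current block already contains it, and from exactly $l$ columns in each of the remaining $k-r$ column-blocks. Applying Br\'egman on the symbol side then gives $\log\sud(n)\le\sum_{l=0}^{k-1}\sum_{r=0}^{k-1}\frac{n}{d_{lr}}\log(d_{lr}!)$ with $d_{lr}=(k-l)(k-r)$, and since $\sum_{l,r}\log d_{lr}=2k\log(k!)$, this evaluates to $2nk\log(k!)-n^2+o(n^2)=n^2\log n-3n^2+o(n^2)$, i.e.\ $(\sud(n))^{1/n^2}\le e^{-3}n(1+o(1))$. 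By contrast, if one only controls the degrees on the cell side by the obvious bound $n-lk-r$, the same computation gives $n\log(n!)-n^2\approx n^2\log n-2n^2$, recovering merely the Latin-square bound $e^{-2}n$. So the symbol-side regularity computation (or a Jensen-type averaging using that the cell-side degrees also sum to $n\,d_{lr}$) is precisely where the block constraint contributes its third factor of $1/e$; your writeup is correct in outline but must make this step explicit rather than deferring it as routine arithmetic.
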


This should be contrasted with a similar result for Latin squares
(see, for example,~\cite[Theorem 17.3]{MR1871828}), which shows that
$(L(n))^{1/n^2}\sim e^{-2}n$ as $n\rightarrow\infty$, where $L(n)$ is
the number of Latin squares of order $n$. In other words,

\begin{corollary}
The fraction of the Latin squares of order $n$ that are Sudoku
squares is $(\frac1e + o(1))^{n^2}$.
\end{corollary}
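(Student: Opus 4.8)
The plan is to count Sudoku squares by filling them one row block at a time, mirroring exactly the two-stage completion procedure behind Theorems~\ref{thm:fullblocks} and~\ref{thm:thirdcase}, and to bound each factor of the resulting product using the classical permanent inequalities of van der Waerden (lower bound) and Minc--Br\'egman (upper bound). Writing $\sud(n)=\prod_{l=0}^{k-1}W_l$, where $W_l$ is the number of ways to append the $(l+1)$st row block to a Sudoku rectangle consisting of $l$ full row blocks, the bijection (filled row block) $\leftrightarrow$ (Stage-1 assignment $M$, Stage-2 edge-colouring) used in the proof of Theorem~\ref{thm:fullblocks} lets me factor $W_l=\sum_M P(G_M)$, where $M$ ranges over the admissible assignments of the $n$ symbols to the $n$ columns ($k$ per column, one per block, avoiding the existing column entries) and $P(G_M)$ is the number of proper $k$-edge-colourings of the $k$-regular bipartite symbol--column graph $G_M$ that $M$ induces.

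For Stage 1 I would count the admissible assignments block by block, since the blocks of a row block are independent. For a single block, splitting each of its $k$ columns into $k$ identical copies turns the assignment count into $\operatorname{per}(\tilde A)/(k!)^k$, where $\tilde A$ is an $n\times n$ zero--one matrix whose rows (symbols) and columns (column-copies) all have degree $d'=k(k-l)$: a symbol is allowed in $k-l$ of the $k$ columns, and each column admits $n-lk=k(k-l)$ symbols. Applying $\operatorname{per}(\tilde A)\ge n!(d'/n)^n$ and $\operatorname{per}(\tilde A)\le (d'!)^{n/d'}$ together with Stirling's formula squeezes the logarithm of the single-block count to $n\log(k-l)+o(\,\cdot\,)$, so the full Stage-1 contribution is $kn\log(k-l)+o(\,\cdot\,)$, uniformly in the starting configuration. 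For Stage 2 I would bound $P(G_M)$ by peeling off colour classes one at a time: each step removes a perfect matching from a regular bipartite graph, so telescoping the per-step permanent estimates gives $\prod_{j=1}^k n!(j/n)^n \le P(G_M)\le \prod_{j=1}^k (j!)^{n/j}$, both sides having logarithm $nk(\log k-2)+o(\,\cdot\,)$; crucially this depends only on $G_M$ being $k$-regular, hence is uniform in $M$.

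Combining the two stages gives $\log W_l = kn\log(k-l)+nk(\log k-2)+o(\,\cdot\,)$, and summing over $l=0,\dots,k-1$ with $\sum_l\log(k-l)=\log(k!)$ yields
\[
\log\sud(n) = kn\log(k!) + k\cdot nk(\log k-2) + o(n^2) = \bigl(\tfrac12 n^2\log n - n^2\bigr)+\bigl(\tfrac12 n^2\log n-2n^2\bigr)+o(n^2) = n^2(\log n-3)+o(n^2),
\]
using $n=k^2$, i.e.\ $\log k=\tfrac12\log n$. Exponentiating gives $(\sud(n))^{1/n^2}\sim e^{-3}n$. The two $-1$'s of the Latin-square exponent are replaced by $-3$ here: Stage 1 contributes $-1$ and Stage 2 the extra $-2$, the latter being the genuinely new effect of the block constraint.

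The main obstacle is quantitative rather than structural. First, one must resist treating Stage 2 as negligible: although $P(G_M)$ is only of order $\exp(\Theta(n^{3/2}\log n))$ for a single row block, summing over the $k$ row blocks raises it to $\Theta(n^2\log n)$, the same order as Stage 1, so it affects the leading constant. Second, because the target $e^{-3}n$ is an asymptotic equivalence for $(\sud(n))^{1/n^2}$ rather than a one-sided bound, the accumulated error in $\log\sud(n)$ must be held to $o(n^2)$; this forces me to verify that the van der Waerden--Minc--Br\'egman gap, which is $O\!\bigl(\tfrac{n}{d'}\log d'\bigr)$ per permanent, sums across all blocks and row blocks to only $O(n(\log n)^2)=o(n^2)$, exactly as in the analogous Latin-square estimate. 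The remaining work --- checking the bijection $W_l=\sum_M P(G_M)$ and that the bounds are uniform over configurations --- is routine given the procedure of Section~\ref{sec:characterization}.
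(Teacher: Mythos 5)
Your proposal is correct and follows essentially the same route as the paper: you reprove Theorem~\ref{thm:count} by the identical row-block-by-row-block, two-stage (assignment plus edge-colouring) decomposition, with the van der Waerden/Falikman--Egoritsjev lower bound and the Minc--Br\'egman upper bound on permanents, the $(k!)^k$ correction for column copies, and the telescoping of colourings into perfect matchings, arriving at $(\sud(n))^{1/n^2}\sim e^{-3}n$. The corollary then follows, exactly as in the paper, by dividing by the cited Latin-square asymptotic $(L(n))^{1/n^2}\sim e^{-2}n$ --- a final ratio step you invoke only implicitly (``the two $-1$'s are replaced by $-3$'') but which is immediate.
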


Proof of Theorem~\ref{thm:count} uses two classical results about the
permanent of matrices. The first result, known as the Van der Waerden
conjecture (proved independently by Falikman and Egoritsjev, see~\cite{MR1871828}), states
that the permanent of every doubly stochastic matrix of order $n$ is
at least $n!n^{-n}$. This implies

 \begin{theorema}{ \rm (\cite{MR1871828})}
\label{thm:A}
The number of perfect matchings in an $r$-regular bipartite graph with
$n$ vertices in each part is at least $n!(r/n)^n$.
\end{theorema}

The other result is an upper bound on the permanent of matrices. This
result, conjectured by Minc in 1967 and proved by Br\'egman in 1973,
shows that the permanent of a $(0,1)$-matrix with row-sums $r_1, r_2,
\ldots, r_n$ is at most $\prod_j (r_j!)^{1/r_j}$. This implies

 \begin{theorema}{ \rm (\cite{MR1871828})}
\label{thm:B}
The number of perfect matchings in an $r$-regular bipartite graph with
$n$ vertices in each part is at most $(r!)^{n/r}$.
\end{theorema}

\def\m{\mathfrak{m}}
\def\M{\mathfrak{M}}

Let $\m(n,r)$ and $\M(n,r)$ denote the minimum and the maximum of the
number of perfect matchings in an $r$-regular bipartite graph with
$n$ vertices in each part. Theorems \ref{thm:A} and \ref{thm:B} can be
written as 

$$n!(r/n)^n \le \m(n,r) \le \M(n,r)\le (r!)^{n/r}.$$

We are now ready to prove Theorem~\ref{thm:count}.

\begin{proof}[Proof of Theorem~\ref{thm:count}]
We use the Sudoku rectangle completion procedure of  Section~\ref{sec:characterization}
to build a Sudoku square row-block by row-block, and count the number
of ways each row-block can be completed. Assume we are at a stage that
$l-1$ row-blocks are completed, and we want to count the number of
ways the $l$th row-block can be completed.

To complete the $l$th row-block, we first process the blocks in this
row-block one by one, and for each block, solve a matching problem to
assign an unordered set of $k$ elements to each column, in such a way
that each element is assigned to only one column within the block, and
an element that is assigned to a column does not appear in the first
$(l-1)$ row-blocks in that column. In the proof of
Theorem~\ref{thm:fullblocks}, we did this by formulating an
assignment problem with $n$ vertices (corresponding to the elements)
on one side and $k$ vertices (corresponding to the columns) on the
other side, and finding an assignment that assigns $k$ elements to
each column. To count the number of such assignments, we formulate
this as a matching problem by replacing each column vertices by $k$ copies.
The assignments in the original graph correspond to perfect matchings
in this new graph. This graph has $n$ vertices on each side and is
$(n-k(l-1))$-regular. Therefore, the number of perfect matchings in
this graph is bounded from below and above by $\m(n, n-k(l-1))$ and
$\M(n, n-k(l-1))$. For each column, since the ordering of which element
is assigned to which copy of that column graph is not important, the number should be
divided by $k!$. This means that the total number of assignments of
$k$ elements to each column, for each block in this row-block, is
between

$$\frac{\m(n, n-k(l-1))}{(k!)^k} \qquad \mbox{and}\qquad \frac{\M(n,
  n-k(l-1))}{(k!)^k}.$$

The total number of assignments for all blocks of this row-block is
the above value raised to the power $k$.

Next, we count the number of ways each of the $k$ elements assigned to
the column can be assigned to the $k$ rows of this column in a way
that no row contains repeated elements. We do this row by row, by
constructing a matching problem for each row. For the first row of the
row-block, the graph consists of $n$ nodes corresponding to the $n$
elements on one side and $n$ nodes corresponding to the $n$ columns on
the other side. Each column node is connected by an edge to each of
the $k$ elements assigned to it in the first stage. It is easy to see
that this is a $k$-regular graph. Each perfect matching in this graph
gives a way to assign elements to the first row of the $l$th
row-block. The number of such perfect matchings is between $\m(n,k)$
and $\M(n,k)$. For the next row, we have the same graph, except the
element already assigned to the first row is no longer connected to
the corresponding column. The graph is $(k-1)$-regular, and has
between $\m(n,k-1)$ and $\M(n,k-1)$ perfect matchings. Continuing this
process, we show that the number of ways to assign the elements to
rows in this row-block is between

$$\m(n,k)\cdot\m(n,k-1)\cdots \m(n,1) \qquad \mbox{and}\qquad
\M(n,k)\cdot\M(n,k-1)\cdots \M(n,1).$$

To obtain a minimum and maximum for the total number of Sudoku
squares, we multiply the above values for all $l=1,\ldots,k$. This
gives the following bounds on the total number of Sudoku squares of order $n$:

\begin{equation}
\label{eqn:lb}
\sud(n)\ge \prod_{l=1}^k\left[\frac{\m(n, n-k(l-1))}{(k!)^k}
\right]^k\cdot \left(\prod_{r=1}^k \m(n,r)\right)^k.
\end{equation}

\begin{equation}
\label{eqn:ub}
\sud(n)\le \prod_{l=1}^k\left[\frac{\M(n, n-k(l-1))}{(k!)^k}
\right]^k\cdot \left(\prod_{r=1}^k \M(n,r)\right)^k.
\end{equation}

Using Theorem~\ref{thm:A} and inequality \eqref{eqn:lb}, we have:

\begin{eqnarray*}
\sud(n) & \ge &
\prod_{l=1}^k\left[\frac{n! (n-k(l-1))^n}{k!^kn^n}
\right]^k\cdot \left(\prod_{r=1}^k (n! (r/n)^n)
\right)^k
\\
&=&
\frac{n!^{n}}{k!^{kn}\cdot k^{n^2}}\prod_{l=1}^k(k-(l-1))^{kn}
\cdot \frac{n!^n}{n^{n^2}} \prod_{r=1}^k r^{kn}\\
&=&
\frac{n!^{n}k!^{kn}}{k!^{kn}\cdot k^{n^2}}
\cdot \frac{n!^nk!^{kn}}{n^{n^2}}
\\
&=&
\frac{n!^{2n}k!^{kn}}{k^{n^2}n^{n^2}}.
\end{eqnarray*}

Using Stirling's formula, we have $n! > (n/e)^n$ and $k! >
(k/e)^k$. Therefore,

$$\sud(n)\ge \frac{n^{2n^2}e^{-2n^2}k^{n^2}e^{-n^2}}{k^{n^2}n^{n^2}}
= (n/e^3)^{n^2}.
$$

Similarly, using Theorem~\ref{thm:B} and inequality \eqref{eqn:ub}, we have:

\begin{eqnarray*}
\sud(n) & \le &
\prod_{l=1}^k\left[\frac{(n-k(l-1))!^{n/(n-k(l-1))}}{(k!)^k}
\right]^k\cdot \left(\prod_{r=1}^k r!^{n/r}\right)^k.
\end{eqnarray*}

We use the following extension of Stirling's formula~\cite{MR0228020}
that holds for every $x$:

$$x! < (x/e)^x\sqrt{2\pi x}\cdot e^{1/(12 x)}.$$

For $x\ge 1$, $\sqrt{2\pi}\cdot e^{1/(12 x)} < 3$, and therefore, $x! <
3(x/e)^x\sqrt{x}$. Using this inequality, we have:

\begin{eqnarray*}
\sud(n) & \le &
\frac1{k!^{kn}}\prod_{l=1}^k(n-k(l-1))!^{kn/(n-k(l-1))}\cdot \prod_{r=1}^k r!^{kn/r}
\\ & < &
\frac1{k!^{kn}}\prod_{l=1}^k\left(3\left(\frac{n-k(l-1)}e\right)^{n-k(l-1)}\sqrt{n-k(l-1)}\right)^{kn/(n-k(l-1))}
\cdot \prod_{r=1}^k \left(3(r/e)^r\sqrt{r}\right)^{kn/r}
\\ & = &
\frac1{k!^{kn}}\prod_{l=1}^k\left(\frac{k(k-(l-1))}e\right)^{kn}
\cdot \prod_{l=1}^k\left(3\sqrt{n-k(l-1)}\right)^{n/(k-(l-1))}
\cdot \prod_{r=1}^k \left(r/e\right)^{kn}
\cdot \prod_{r=1}^k \left(3\sqrt{r}\right)^{kn/r}
\\ & = &
\frac{k^{n^2}k!^{kn}}{k!^{kn}e^{n^2}}
\cdot \prod_{r=1}^k\left(3\sqrt{rk}\right)^{n/r}
\cdot \frac{k!^{kn}}{e^{n^2}}
\cdot \prod_{r=1}^k \left(3\sqrt{r}\right)^{kn/r}
\\ & = &
\frac{k^{n^2}k!^{kn}}{e^{2n^2}}
\cdot \prod_{r=1}^k\left((9r)^{1+k}k\right)^{n/(2r)}
\\ & < &
\frac{k^{n^2}(3(k/e)^k\sqrt{k})^{kn}}{e^{2n^2}}
\cdot \prod_{r=1}^k\left((9r)^{1+k}k\right)^{n/(2r)}
\\ & = &
\frac{n^{n^2}}{e^{3n^2}}\cdot(3\sqrt{k})^{kn}
\cdot \prod_{r=1}^k\left((9r)^{1+k}k\right)^{n/(2r)}.
\end{eqnarray*}

Therefore,

\begin{equation}
\label{eqn:sud:1}
(\sud(n))^{1/n^2}  <
\frac{n}{e^3}\cdot(3\sqrt{k})^{1/k}
\cdot \prod_{r=1}^k\left((9r)^{1+k}k\right)^{1/(2rn)}.
\end{equation}

All that remains is to prove that $(3\sqrt{k})^{1/k} \cdot
\prod_{r=1}^k\left((9r)^{1+k}k\right)^{1/(2rn)} = 1+o(1)$. We use the
inequality that for $x < 1$, $e^x < 1+2x$. Therefore,

\begin{equation}
\label{eqn:sud:2}
(3\sqrt{k})^{1/k} = e^{\frac{\ln(3\sqrt{k})}{k}}< 1 + \frac{2\ln(3\sqrt{k})}{k} = 1 + o(1).
\end{equation}

Also, using the bound $\sum_{r=1}^k \frac1r < 1 + \ln(k)$ on the
harmonic series, we obtain

\begin{eqnarray}
\label{eqn:sud:3}
\prod_{r=1}^k\left((9r)^{1+k}k\right)^{1/(2rn)}
& < &
\prod_{r=1}^k\left((9k)^{1+k}k\right)^{1/(2rn)} \nonumber\\
& < &
\prod_{r=1}^k\left((9k)^{2k}\right)^{1/(2rn)} \nonumber\\
& = &
(9k)^{\frac{1}{k}\sum_{r=1}^k\frac1r}
\nonumber\\
& < &
(9k)^{\frac{1+\ln(k)}{k}}
\nonumber\\
& = &
\exp\left(\frac{1+\ln(k)}{k}\ln(9k)\right)
\nonumber\\
& < & 1 + 2\cdot\frac{1+\ln(k)}{k}\ln(9k) = 1+o(1).
\end{eqnarray}

Putting together Equations~\eqref{eqn:sud:1}, \eqref{eqn:sud:2}, and
\eqref{eqn:sud:3}, we obtain:

$$(\sud(n))^{1/n^2}  < \frac{n}{e^3} (1+o(1)),$$
as desired.
\end{proof}
\section*{Acknowledgements}
Part of the research of the second author was done while he was visiting professor Delaram Kahrobaee at the
Graduate Center of City University of New York. He thanks for the hospitality during his visit.




\end{document}